\def\spacingset#1{\renewcommand{\baselinestretch}{#1}\small\normalsize}
\newtheorem{Lemma}{Lemma}
\newtheorem{Corollary}{Corollary}
\newtheorem{Theorem}{Theorem}
\newtheorem{Definition}{Definition}
\newtheorem{Remark}{Remark}
\DeclareMathOperator*{\Pois}{Pois}
\DeclareMathOperator*{\Var}{Var}
\newcommand{\XX}{X_{ \bm{1}  } }
\newcommand{\Xhar}[1]{X_{ \bm{#1} } }
\title{Asymptotics of multivariate contingency tables with fixed marginals}
\author[qz]{Quan~Zhou}
\ead{quan.zhou@rice.edu}
\address[qz]{Department of Statistics, Rice University, Houston, Texas 77005, U.S.A.}
\begin{document}
 
\begin{abstract}
We consider the asymptotic distribution of a cell in a $2 \times \cdots \times 2$ contingency table as the fixed marginal totals tend to infinity. 
The asymptotic order of the cell variance is derived and a useful diagnostic is given for determining whether the cell has a Poisson limit or a Gaussian limit. 
There are three forms of Poisson convergence. 
The exact form is shown to be determined by the growth rates of the two smallest marginal totals. 
The results are generalized to contingency tables with arbitrary sizes and are further complemented with concrete examples. 
\end{abstract}

\begin{keyword}
coupon collector's problem \sep negative association \sep negative relation \sep random allocation \sep Stein-Chen's method   
\end{keyword}

\maketitle
\vfill
\spacingset{1.1}

\newpage

\section{Introduction} \label{sec1}
This work considers the asymptotic distribution of a cell in a $2 \times \cdots \times 2$ contingency table as the fixed marginal totals tend to infinity. 
The literature on this problem has been documented under various names: ``the coupon collector problem,'' ``capture-recapture,'' ``the committee problem,'' ``matrix occupancy,'' ``random allocation,'' and ``allocation by complexes''~\citep[Sec.~6.4]{barbour1992poisson}. 
The reader is encouraged to consult \citet{holst1986birthday} and \citet{stadje1990collector} for historical accounting of these problems. 

The present work borrows the framework and terminology of the coupon-collector problem. Consider $n$ distinct coupons and $m$ coupon collectors operating independently and let the  $i$th collector collect $a_i$ distinct coupons. Let $\mathcal{C} = \{1, 2, \dots, m \} $ denote the set of the collectors. 
For each set $\mathcal{C}' \subseteq \mathcal{C}$, we are interested in the number of coupons that are collected by $\mathcal{C}'$ and by no others. 
These counts may be summarized in an $m$-way $2 \times 2 \times \cdots \times 2$ contingency table. 
Let $\Xhar{v}$ denote the count in the cell $\bm{v} = (v_1, \dots, v_m)$, where $v_i \in \{1, 2\}$   and $v_i = 1$ indicates that a coupon is collected by collector $i$. 
This contingency table must satisfy $\sum_{v_i = 1} \Xhar{v} = a_i$  and  $\sum_{v_i = 2} \Xhar{v} = n - a_i$, for $i = 1, \dots, m$, where the marginal total $a_i$ is treated as fixed. 
For the case of two collectors, the $2 \times 2$ contingency table is shown in Table~\ref{table:toy}. 

\begin{table}[h!]
\begin{center}
\begin{tabular}{ c c | c    c |}
& &  \multicolumn{2}{|c|}{Collector 2} \\
& & Collected &  Not collected  \\
\hline
\multirow{2}{*}{ Collector 1} & Collected    &  \multicolumn{1}{c|}{$X_{(1,1)}$}  & $X_{(1,2)}$ \\
\cline{3-4}
 & Not collected   &  \multicolumn{1}{c|}{$X_{(2,1)}$} & $X_{(2,2)}$ \\
\hline
\end{tabular}
\caption{The contingency table for $m = 2$. 
The cell counts must satisfy $X_{(1,1)} + X_{(1,2)} = a_1$ and $X_{(1,1)} + X_{(2,1)} = a_2$.
When we have a third collector, we can construct a $2 \times 2 \times 2$ contingency table by splitting each cell in the above table into two, according to whether the coupon is collected by collector $3$. 
}\label{table:toy}
\end{center}
\end{table}

We consider the distribution of an arbitrary cell under the following asymptotic conditions:\\
\vspace{-0.3cm}
\begin{enumerate}[({A}1)]
\item $n \rightarrow \infty$; 
\item $a_i = a_i (n) \rightarrow \infty$ and $n - a_i \rightarrow \infty$ for $i = 1, \dots, m$; 
\item $1 \leq a_1 \leq a_2 \leq \cdots \leq a_m \leq n - 1$; 
\item $a_i / n \rightarrow \alpha_i \in [0, 1]$ for $i = 1, \dots, m$. 
\end{enumerate}
Under (A1)--(A4), each cell can be treated equivalently up to relabelling of rows and columns. Therefore, without loss of generality, it suffices to consider one cell. Henceforth our analysis shall concern the cell $\XX$, where  $\bm{1} = (1, \dots, 1)$, i.e. the number of the coupons that are collected by all collectors.

To the best of our knowledge, the first complete analysis of all the possible asymptotic limits of $\XX$ is due to~\citet{vatutin1983limit}. 
The authors showed that $\XX$ has either a normal or a Poisson limit depending on whether $\Var(\XX)$ converges (see Theorem~\ref{th:main} below). 
This was accomplished by verifying that its generating function has only real roots~\citep[see also][]{kou1996asymptotics}.  
Alternative proofs for this problem and its variants are given in \citet[Chap.~VII]{kolchin1978random}, \citet{holst1980matrix}, \citet{mitwalli2002occupancy}, \citet{harris1989poisson},  and \citet{cekanavicius2000signed}.
See~\citet{smythe2011generalized} for an extension to the case in which $a_1, \dots, a_m$ are random. 
See ~\citet{lareida2017bittorrent} for a more recent application of these results.

\begin{Theorem}[\citet{vatutin1983limit}]\label{th:main}
Under the asymptotic assumptions {\normalfont (A1)--(A4)}, if $\Var(\XX) \rightarrow \infty$,  $\XX^* \equiv (\XX - \mathbb{E}(\XX) )/ \sqrt{\Var(\XX)}\overset{ \mathcal{D} }{\rightarrow}  \mathcal{N}(0, 1)$; 
if $\Var(\XX) \rightarrow \rho < \infty$,  $\XX$ has a Poisson limit in the sense that there exists a sequence of  constants $C_n$  such that $\XX + C_n  \overset{ \mathcal{D} }{\rightarrow} \Pois(\rho)   $ or $-\XX + C_n \overset{ \mathcal{D} }{\rightarrow} \Pois(\rho)$. 
(If $\rho = 0$, $\Pois(0)$ refers to the degenerate distribution $\delta_0$.)
\end{Theorem}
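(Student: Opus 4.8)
\medskip
\noindent\emph{Proof proposal.} I would argue from the coupon‑collector construction rather than from generating functions. Let $S_i\subseteq\{1,\dots,n\}$ be the uniformly random size‑$a_i$ set of coupons collected by collector $i$, with $S_1,\dots,S_m$ independent and $\bar S_i$ the complement, so that $\XX=\sum_{j=1}^n I_j$ with $I_j=\prod_{i=1}^m\mathbf 1\{j\in S_i\}$; by symmetry in the coupons the $I_j$ are exchangeable, $p:=\mathbb E I_j=\prod_i a_i/n$. Each $(\mathbf 1\{j\in S_i\})_j$ is negatively associated (a $0$--$1$ vector with fixed sum), and negative association survives independent products and coordinatewise‑monotone maps, so — passing to the complement indicators $\mathbf 1\{j\in\bar S_i\}$ where convenient — the family $(I_j)_j$, the family underlying $Z_k:=a_k-\XX=|S_k\cap\bigcup_{i\ne k}\bar S_i|$ $(1\le k\le m)$, and the family underlying $Z_0:=n-\XX=|\bigcup_i\bar S_i|$ are all negatively associated, hence negatively related in the sense of Stein--Chen. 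In particular $\mathrm{Cov}(I_1,I_2)\le0$, so $\Var(\XX)=\Var(Z_k)=\Var(Z_0)$ never exceeds any of $\mathbb E\XX$, $\mathbb E Z_k=a_k\bigl(1-\prod_{i\ne k}a_i/n\bigr)$, $\mathbb E Z_0=n-\mathbb E\XX$; and $a_1\le\cdots\le a_m$ forces $\min_k\mathbb E Z_k=\mathbb E Z_1=a_1-\mathbb E\XX$.

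The analytic core is a sharp estimate of $\Var(\XX)$. With $\lambda_n:=\mathbb E\XX=n\prod_i(a_i/n)$, the identity $\Var(\XX)=\mathbb E[\XX(\XX-1)]+\mathbb E\XX-(\mathbb E\XX)^2$ and the product formula $\mathbb E[\XX(\XX-1)]=n(n-1)\prod_i\frac{a_i(a_i-1)}{n(n-1)}$ give the exact relation
\[
\Var(\XX)=\lambda_n\bigl(1-\lambda_n\varepsilon_n\bigr),\qquad
\varepsilon_n=1-\frac{\prod_{i=1}^m(1-1/a_i)}{(1-1/n)^{m-1}}\in\Bigl(0,\tfrac1{\lambda_n}\Bigr],
\]
the endpoints of the range for $\varepsilon_n$ coming from $a_i<n$ and from $\Var(\XX)\ge0$. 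Expanding $\varepsilon_n$ to the needed order in $1/a_1,\dots,1/a_m,1/n$ and tracking the cancellation between the factorial moment and $(\mathbb E\XX)^2$, the aim is the diagnostic: along any subsequence, $\Var(\XX)$ stays bounded iff at least one of $\mathbb E\XX,\ a_1-\mathbb E\XX,\ n-\mathbb E\XX$ stays bounded (the ``only if'' direction being the elementary inequalities above). I expect this deterministic estimate to be the main obstacle of the whole argument.

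Granting it, suppose $\Var(\XX)\to\rho<\infty$ and fix a subsequence. Along a further subsequence one of $\mathbb E\XX,\ a_1-\mathbb E\XX,\ n-\mathbb E\XX$ is bounded; write the corresponding variable ($\XX$, $Z_1$, or $Z_0$) as $W_n=\sum_j\mathbf 1_{n,j}$, a sum of negatively related indicators with $\pi_{n,j}:=\mathbb E\mathbf 1_{n,j}=\mathbb E W_n/n\to0$. The Stein--Chen bound for negatively related summands \citep{barbour1992poisson} reads
\[
d_{\mathrm{TV}}\bigl(\mathcal L(W_n),\Pois(\mathbb E W_n)\bigr)\le\min\!\bigl(1,(\mathbb E W_n)^{-1}\bigr)\Bigl(\mathbb E W_n-\Var(W_n)+2\textstyle\sum_j\pi_{n,j}^2\Bigr),
\]
where $\sum_j\pi_{n,j}^2=(\mathbb E W_n)^2/n\to0$ and, since $\mathbb E W_n-\Var(W_n)=\sum_j\pi_{n,j}^2+\sum_{j\ne l}|\mathrm{Cov}(\mathbf 1_{n,j},\mathbf 1_{n,l})|$, what remains is $n(n-1)|\mathrm{Cov}(\mathbf 1_{n,1},\mathbf 1_{n,2})|\to0$ — a short computation, since this covariance factors across collectors (with within‑collector factor $\tfrac{a_i(a_i-1)}{n(n-1)}-(a_i/n)^2$) and $\mathbb E W_n$ is bounded while $a_1\to\infty$. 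Hence $\Var(W_n)=\Var(\XX)\to\rho$ forces $\mathbb E W_n\to\rho$ and $W_n\overset{\mathcal D}{\rightarrow}\Pois(\rho)$, i.e.\ along this subsequence one of $\XX\overset{\mathcal D}{\rightarrow}\Pois(\rho)$, $-\XX+a_1\overset{\mathcal D}{\rightarrow}\Pois(\rho)$, $-\XX+n\overset{\mathcal D}{\rightarrow}\Pois(\rho)$ holds; a routine subsequence argument then delivers the statement of Theorem~\ref{th:main}, with $C_n\in\{0,a_1,n\}$ (and limits $\delta_0$ when $\rho=0$).

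Finally suppose $\Var(\XX)\to\infty$. Here I would use Stein's method of exchangeable pairs: let $\XX'$ be obtained by picking a collector $i$ uniformly, deleting a uniformly random element of $S_i$, inserting a uniformly random element of $\bar S_i$, and recomputing the cell count. Then $(\XX,\XX')$ is exchangeable, $|\XX'-\XX|\le1$, and a direct calculation gives
\[
\mathbb E[\XX'-\XX\mid S_1,\dots,S_m]=-\kappa_n(\XX-\mathbb E\XX)+E_n,\qquad
\kappa_n=\frac nm\sum_{i=1}^m\frac1{a_i(n-a_i)},
\]
with mean‑zero remainder $E_n=\frac1m\sum_i(n-a_i)^{-1}\bigl(|\bigcap_{k\ne i}S_k|-\mathbb E|\bigcap_{k\ne i}S_k|\bigr)$. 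The standard exchangeable‑pair inequality then bounds $d\bigl(\mathcal L(\XX^*),\mathcal N(0,1)\bigr)$ by a multiple of $\Var(\XX)^{-1/2}$ (using $|\XX'-\XX|\le1$), a term in $\sqrt{\Var\bigl(\mathbb E[(\XX'-\XX)^2\mid S_1,\dots,S_m]\bigr)}$, and a term controlling $E_n$; each is $o(1)$ by applying the second‑moment estimates of the second paragraph to $\XX$ and to the $(m-1)$‑fold intersections $\bigcap_{k\ne i}S_k$ — this is where $\Var(\XX)\to\infty$ enters — so $\XX^*\overset{\mathcal D}{\rightarrow}\mathcal N(0,1)$. (Equivalently one may condition on $S_2,\dots,S_m$, rendering $\XX$ hypergeometric with random parameter the $(m-1)$‑collector cell count, and combine the hypergeometric CLT with an induction on $m$; this is cleaner in structure but forces a split according to whether $\mathbb E[\Var(\XX\mid S_2,\dots,S_m)]$ or $\Var(\mathbb E[\XX\mid S_2,\dots,S_m])$ carries the divergence.)
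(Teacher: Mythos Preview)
Your Poisson argument has a genuine gap in the regime $\alpha_1=\alpha_2=1$. The diagnostic you set as your target --- that $\Var(\XX)$ bounded forces one of $\mathbb E\XX$, $a_1-\mathbb E\XX$, $n-\mathbb E\XX$ to be bounded --- is false. Take $m=2$, $a_1=a_2=n-\sqrt n$: then $\Var(\XX)\to1$ by~\eqref{eq:var.m2}, yet $\mathbb E\XX\sim n$, $a_1-\mathbb E\XX\sim\sqrt n$, and $n-\mathbb E\XX\sim2\sqrt n$ all diverge (indeed, under (A2) one always has $n-\mathbb E\XX\ge n-a_1\to\infty$). Your $Z_0=n-\XX=|\bigcup_i\bar S_i|$ is the wrong object here: it counts coupons missed by \emph{at least one} collector, and each $|\bar S_i|=n-a_i\to\infty$. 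The quantity whose mean actually tracks $\Var(\XX)$ in this regime is $W=\XX+(m-1)n-\sum_i a_i=\sum_j\bigl((\sum_i J_{ij}-1)\vee0\bigr)$, with $\mathbb E W\sim(n-a_1)(n-a_2)/n$; but then the summands take values in $\{0,\dots,m-1\}$ rather than $\{0,1\}$, so the negatively-related-Bernoulli Stein--Chen bound you quote does not apply directly. The paper handles this by invoking a bound for integer-valued negatively associated summands \citep{daly2017relaxation} together with the observation that $\mathbb P(Y''_j\ge2)=o(\mathbb P(Y''_j=1))$; your proposal would need a comparable replacement. (A minor point: the ``elementary inequalities'' $\Var(\XX)\le\min\{\mathbb E\XX,\,a_1-\mathbb E\XX,\,n-\mathbb E\XX\}$ give the \emph{if} direction of your diagnostic, not the \emph{only if} direction as you write.)

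Outside of case~(iii), your route to the Poisson half is essentially the paper's: the same negative-association decompositions for $\XX$ and $a_1-\XX$ appear in Lemma~\ref{prop:na}, and the paper reaches the conclusion via Lemma~\ref{prop:main} (showing $\mathbb E W_n\sim\Var(\XX)$ in each subcase) in place of your subsequence extraction. For the Gaussian half, the paper does not give its own proof of Theorem~\ref{th:main}; it cites \citet{vatutin1983limit}, whose argument goes through real-rootedness of the probability generating function. Your exchangeable-pairs proposal is a genuinely different and plausible route, but the sketch leaves the decisive estimates --- that $\Var\bigl(\mathbb E[(\XX'-\XX)^2\mid S_1,\dots,S_m]\bigr)=o(\kappa_n^2\Var(\XX))$ and that the remainder $E_n$ is negligible --- unverified; both require applying the order-of-variance analysis of Theorem~\ref{th:order.V} to the $(m-1)$-fold intersections $\bigcap_{k\ne i}S_k$, and this is where the real work of the normal case lies.
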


In Section~\ref{sec:order.V}, we calculate the asymptotic order of $\Var(\XX)$. This provides a useful diagnostic for determining whether the limiting distribution of $\XX$ given by Theorem~\ref{th:main} is normal or Poisson.  
In Section~\ref{sec:pois}, we show that the exact form of Poisson convergence is determined only by $a_1$ and $a_2$.
Section \ref{sec:general} generalizes the results of Sections \ref{sec:order.V} and \ref{sec:pois} to contingency tables of arbitrary size.

\section{Asymptotics of the cell variance}\label{sec:order.V}
Mathematical induction will be used to prove most of our key results. The induction setup is described as follows. 
In lieu of considering an $m$-way contingency table, we consider a sequence of contingency tables, each of which has a grand total count of $n$. 
The $k$th table records the coupon counts for the first $k$ coupon collectors and we use $\XX^{(k)}$ to denote the number of the coupons that are collected by each of the first $k$ collectors (whether the coupon is collected by the other collectors is not considered.) 
When we use induction to prove a statement regarding $\XX$, we always start from checking the statement for $\XX^{(2)}$ and then proceed to prove it for $\XX^{(k)}$, for $k = 2, \dots, m$. 

Let $E_k$ and $V_k$ denote the expectation and the variance of $ \XX^{(k)}$. 
Clearly, $E_1 = a_1$ and $V_1 = 0$, and for $k = 2, 3, \dots$, 
\begin{equation*}\label{eq:ek}
E_k   =   n \prod\limits_{i=1}^k \dfrac{ a_i }{n} = \dfrac{a_k}{n} E_{k - 1}, 
\end{equation*}
Since $\XX^{(2)}$ follows a hypergeometric distribution,
\begin{equation}\label{eq:var.m2}
V_2 = \dfrac{ a_1 a_2 (n - a_1) (n - a_2)  }{n^2(n-1)}.
\end{equation}
 
We proceed to derive a recursive characterization of $V_k$.  

\begin{Lemma} \label{lm:decomp}
For $k =2, 3, \dots$,  
\begin{equation}\label{eq:decomp2}
V_k =   \dfrac{a_k (n - a_k)  E_{k-1}  ( n - E_{k-1} )  }{n^2 (n-1)} +   \dfrac{ a_k (a_k - 1) }{n(n-1)}   V_{k-1} . 
\end{equation} 
\end{Lemma}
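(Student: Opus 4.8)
The plan is to obtain \eqref{eq:decomp2} by conditioning on $\XX^{(k-1)}$ and invoking the law of total variance. The crucial structural fact is that the set of $a_k$ coupons collected by the $k$th collector is a uniformly random $a_k$-subset of the $n$ coupons, drawn independently of the choices of the first $k-1$ collectors. Consequently, on the event $\{\XX^{(k-1)} = j\}$---which designates some particular set of $j$ coupons as those held by all of the first $k-1$ collectors---the count $\XX^{(k)}$ is the size of the intersection of a uniform random $a_k$-subset with a fixed $j$-subset, and its conditional law depends on the configuration of the first $k-1$ collectors only through $j$. Hence $\XX^{(k)} \mid \XX^{(k-1)} = j$ is hypergeometric with population $n$, $j$ "successes" and $a_k$ draws, which yields the conditional moments
\begin{equation*}
\mathbb{E}\bigl(\XX^{(k)} \mid \XX^{(k-1)}\bigr) = \frac{a_k}{n}\, \XX^{(k-1)}, \qquad \Var\bigl(\XX^{(k)} \mid \XX^{(k-1)}\bigr) = \frac{a_k(n - a_k)}{n^2(n-1)}\, \XX^{(k-1)}\bigl(n - \XX^{(k-1)}\bigr).
\end{equation*}

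Next I would substitute these into the decomposition $V_k = \mathbb{E}\bigl[\Var(\XX^{(k)} \mid \XX^{(k-1)})\bigr] + \Var\bigl[\mathbb{E}(\XX^{(k)} \mid \XX^{(k-1)})\bigr]$. The second term equals $\tfrac{a_k^2}{n^2} V_{k-1}$ at once. For the first term, I would write $\mathbb{E}\bigl[\XX^{(k-1)}(n - \XX^{(k-1)})\bigr] = n E_{k-1} - \mathbb{E}[(\XX^{(k-1)})^2] = E_{k-1}(n - E_{k-1}) - V_{k-1}$, so that
\begin{equation*}
\mathbb{E}\bigl[\Var(\XX^{(k)} \mid \XX^{(k-1)})\bigr] = \frac{a_k(n - a_k)}{n^2(n-1)}\Bigl( E_{k-1}(n - E_{k-1}) - V_{k-1}\Bigr).
\end{equation*}
Adding the two contributions reproduces the first summand of \eqref{eq:decomp2} verbatim, while the coefficient of $V_{k-1}$ becomes $\tfrac{a_k^2}{n^2} - \tfrac{a_k(n-a_k)}{n^2(n-1)}$, which simplifies to $\tfrac{a_k(a_k-1)}{n(n-1)}$ after clearing denominators---a short algebraic step I would not belabour.

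I do not anticipate a genuine obstacle here; the one point deserving care is the justification that the conditional law of $\XX^{(k)}$ given $\XX^{(k-1)}$ is \emph{exactly} hypergeometric, i.e. that, given $\XX^{(k-1)} = j$, only the number $j$ (and not the identities of those $j$ coupons) matters for the $k$th collector's uniform draw---this is immediate from the independence of the collectors together with exchangeability of the coupons. Everything downstream is a brief moment computation, and the base case $k = 2$ requires no separate treatment: since $\XX^{(1)} \equiv a_1$ is degenerate with $V_1 = 0$ and $E_1 = a_1$, the recursion collapses to $V_2 = a_1 a_2 (n-a_1)(n-a_2)/\{n^2(n-1)\}$, in agreement with \eqref{eq:var.m2}.
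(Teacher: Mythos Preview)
Your proposal is correct and follows essentially the same route as the paper: condition on $\XX^{(k-1)}$, identify the conditional law as hypergeometric, and apply the law of total variance, with the algebra you spell out being exactly the ``routine calculations'' the paper alludes to. The only difference is that you make the hypergeometric justification and the simplification of the $V_{k-1}$ coefficient explicit, which the paper leaves implicit.
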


\begin{Remark}
The formula~\eqref{eq:decomp2} decomposes $V_k$ into two additive components. 
The first component is the variance of a cell from a $2 \times 2$ contingency table with fixed marginal totals $a_k$ and $E_{k-1}$. 
The second component captures the variation of $ \XX^{(k - 1)} $, which is 0 if $V_{k-1} \rightarrow 0$.  
If $\alpha_k = 1$, the second component converges to $V_{k-1}$. 
See~\citet{darroch1958multiple} for a closed-form expression for $V_k$.
\end{Remark}

\begin{proof}
By the law of total variance, we express $\Var(\XX^{(k)})$ as
\begin{equation*}\label{eq:decomp.V}
\begin{aligned}
V_k = \; & \mathbb{E} \left( \Var(  \XX^{(k)}    \mid \XX^{(k - 1)}  ) \right) 
+ \Var \left(\mathbb{E}(   \XX^{(k)}   \mid \XX^{(k - 1)}  ) \right).   
\end{aligned}
\end{equation*}
After conditioning on $\XX^{(k - 1)}$, $\XX^{(k )}$ is a hypergeometric random variable and thus we obtain  
\begin{align*}
\mathbb{E}(\XX^{(k )} \mid \XX^{(k - 1)}  )  = \; &      a_k   \XX^{(k - 1)} /n , \\
 \Var(  \XX^{(k )}  \mid \XX^{(k - 1)}  )  = \; &   a_k (n - a_k) \XX^{(k - 1)} (n  - \XX^{(k - 1)} )  / n^2 (n-1) . 
\end{align*}
Routine calculations using $ \mathbb{E}  (  \XX^{(k - 1)} )^2   = V_{k - 1} + E_{k-1}^2 $ yield~\eqref{eq:decomp2}. 
\end{proof}

Lemma~\ref{lm:decomp} will be important for proving a series of asymptotic results for our problem.  
Our first asymptotic result regards the asymptotic order of $\Var(\XX)$. 
Let $\sim$ denote the asymptotic equivalence, i.e., $x_n \sim y_n$ if $\lim_{n \rightarrow \infty} x_n/y_n = 1$. 
Let $\asymp$ denote that two positive sequences have the same asymptotic order, i.e., $x_n \asymp y_n$ if both $\limsup_{n \rightarrow \infty} x_n/y_n$ and $\liminf_{n \rightarrow \infty} x_n / y_n$  are finite and strictly positive. 
Hence, $x_n \sim y_n$ is a special case of $x_n \asymp y_n$. 

\begin{Theorem}[order of $\Var (\XX)$] \label{th:order.V}
Under the assumptions {\normalfont (A1)--(A4)}, the asymptotic order of $\Var(\XX)$ is 
\begin{equation*}
\Var(\XX) \asymp  \dfrac{  (n - a_1)   (n - a_2) }{n} \prod\limits_{i=1}^m \dfrac{a_i}{n} =  \left( 1 - \dfrac{a_1}{n} \right)  \left( 1 - \dfrac{a_2}{n} \right) \mathbb{E} (\XX). 
\end{equation*}
\end{Theorem}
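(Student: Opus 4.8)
The plan is to prove both bounds --- $\Var(\XX) = O\big( (1 - a_1/n)(1 - a_2/n)\mathbb{E}(\XX) \big)$ and $\Var(\XX) = \Omega\big( (1 - a_1/n)(1 - a_2/n)\mathbb{E}(\XX) \big)$ --- by induction on $m$, using the recursion~\eqref{eq:decomp2} from Lemma~\ref{lm:decomp} as the engine. Write $b_i = 1 - a_i/n$ for brevity, so that the target order is $W_m := b_1 b_2 E_m$ (since $E_m = \mathbb{E}(\XX)$ and the two smallest complements $b_1, b_2$ are the largest among the $b_i$). For the base case $k = 2$, formula~\eqref{eq:var.m2} gives $V_2 = \frac{a_1 a_2 (n-a_1)(n-a_2)}{n^2(n-1)} \sim b_1 b_2 \cdot \frac{a_1 a_2}{n} = b_1 b_2 E_2$, so $V_2 \asymp W_2$; note $E_2 = a_1 a_2 / n$ here because for $k=2$ there are only two collectors. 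That handles $k = 2$ exactly.

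For the inductive step, suppose $V_{k-1} \asymp W_{k-1} = b_1 b_2 E_{k-1}$. I substitute into~\eqref{eq:decomp2}:
\begin{equation*}
V_k = \frac{a_k(n - a_k)\, E_{k-1}(n - E_{k-1})}{n^2(n-1)} + \frac{a_k(a_k - 1)}{n(n-1)} V_{k-1}.
\end{equation*}
For the \emph{upper} bound I bound each term: in the first term, $n - E_{k-1} \leq n$ and $a_k(n-a_k)/(n^2(n-1)) \leq b_k / (n-1) \asymp b_k/n$, giving a contribution $\lesssim b_k E_{k-1} = b_k \cdot (n/a_k) E_k \asymp b_k E_k / \alpha_k$ when $\alpha_k > 0$; since $k \geq 3$ and the $a_i$ are ordered, $b_k \leq b_2 \leq b_1$, so $b_k E_{k-1} \lesssim b_1 b_2 E_{k-1} \cdot (\text{something})$ --- here I need to be slightly careful, and the cleanest route is to show the first term is $\lesssim b_1 b_2 E_k$ directly, which follows because $b_k E_{k-1} = b_k \cdot \frac{n}{a_k} E_k$ and $b_k/\alpha_k$ is bounded unless $\alpha_k = 0$, a case handled separately below. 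For the second term, $a_k(a_k-1)/(n(n-1)) \leq (a_k/n)^2 \leq 1$ times $V_{k-1} \lesssim b_1 b_2 E_{k-1}$, and I convert $E_{k-1}$ to $E_k$ via $E_{k-1} = (n/a_k) E_k$, which is again controlled when $\alpha_k > 0$. For the \emph{lower} bound I keep only the first term and use $n - E_{k-1} \geq n - E_2 \geq n - a_1 \geq$ a positive fraction of $n$ when $\alpha_1 < 1$, plus $a_k(n-a_k)/(n^2(n-1)) \gtrsim b_k/n$, yielding $V_k \gtrsim b_k E_{k-1}$; then I need $b_k E_{k-1} \gtrsim b_1 b_2 E_k$, i.e., I must recover the factors $b_1 b_2$. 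This last point is the crux.

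The \textbf{main obstacle}, then, is bookkeeping the edge cases $\alpha_i \in \{0, 1\}$ and making sure the factors $b_1 b_2 = (1 - a_1/n)(1 - a_2/n)$ emerge with the right order rather than being swallowed or over-counted. Specifically: (i) if $\alpha_k = 0$ for some $k \geq 3$, then $E_{k-1}$ can be of strictly larger order than $E_k$, so I cannot freely swap them --- instead I should track the chain $E_{k-1}/E_k = n/a_k$ and observe that in the lower bound the first term of~\eqref{eq:decomp2} is $\asymp b_k E_{k-1}(1 - E_{k-1}/n)$, and since $E_{k-1} \le a_1 \le a_2$ we have both $1 - E_{k-1}/n \ge b_1$ and $1 - E_{k-1}/n \ge b_2$, but I only need \emph{one} such factor from here and must get the other $b$-factor plus the remaining structure from $E_k$ itself --- so the honest statement to carry through the induction is $V_k \asymp b_1 b_2 E_k$ with the understanding that $b_1 b_2 E_k = b_1 b_2 \prod_{i \le k}(a_i/n) \cdot n$, and I verify the recursion preserves exactly this product. (ii) If $\alpha_2 = 1$ (hence $\alpha_i = 1$ for all $i \ge 2$), the second term of~\eqref{eq:decomp2} does not shrink and I should check the recursion is consistent with $b_1 b_2 E_k$ still being the order. (iii) If $\alpha_1 = 1$, then $b_1 \to 0$ and the claim says $\Var(\XX) \to 0$ faster than $\mathbb{E}(\XX)$; this needs the first-term bound $n - E_{k-1} \le n - E_{k-1}$ combined with $E_{k-1} \ge a_1 a_2 / n$ actually \emph{large}, forcing $n - E_{k-1}$ small --- so here the upper bound is doing real work and I should make sure $1 - E_{k-1}/n = O(b_1 \vee b_2)$ is available, which it is since $E_{k-1} \ge E_2 = a_1 a_2/n \ge a_1 (n-1)/n \cdot (a_2/(n-1))$... more simply $n - E_{k-1} \le n - a_1 a_2 / n = (n^2 - a_1 a_2)/n \le (n-a_1)(n + a_2)/n + \dots$, a routine inequality I will not grind through here but which gives $1 - E_{k-1}/n \lesssim b_1$. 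Once these cases are organized, the induction closes cleanly, and the final identity $b_1 b_2 E_m = \frac{(n-a_1)(n-a_2)}{n}\prod_{i=1}^m \frac{a_i}{n}$ is immediate from $E_m = n \prod_{i=1}^m a_i/n$.
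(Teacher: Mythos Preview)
Your framework matches the paper's---induction on $k$ via Lemma~\ref{lm:decomp}---but the lower-bound step has a real gap. You propose to keep only the first term of~\eqref{eq:decomp2}, obtaining $V_k \gtrsim b_k E_{k-1}$ when $\alpha_1 < 1$, and then you need $b_k E_{k-1} \gtrsim b_1 b_2 E_k$, i.e.\ $b_k \gtrsim b_1 b_2\, (a_k/n)$. This fails whenever $\alpha_k > 0$ but $b_k = o(b_1 b_2)$: take $\alpha_1 = \alpha_2 = 1/2$ and $\alpha_k = 1$ for some $k \ge 3$ (e.g.\ $a_k = n - \sqrt{n}$). Then $b_1 b_2\,(a_k/n) \to 1/4$ while $b_k \to 0$. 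In this regime the first term of~\eqref{eq:decomp2} is genuinely $o(b_1 b_2 E_k)$, so the lower bound \emph{must} come from the second term, $\frac{a_k(a_k-1)}{n(n-1)} V_{k-1} \asymp V_{k-1}$, which carries the induction hypothesis through directly. Your edge case~(ii) covers only $\alpha_2 = 1$, not this scenario.

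The paper resolves this with a single dichotomy: $\alpha_k = 0$ versus $\alpha_k \in (0,1]$. If $\alpha_k = 0$ then all $\alpha_i = 0$ by~(A3), so $b_1 b_2 \sim 1$; the first term of~\eqref{eq:decomp2} is $\sim E_k$ and the second is $o(E_k)$, giving $V_k \asymp E_k$. If $\alpha_k > 0$ then the second term is $\asymp V_{k-1}$, and the first term is shown to be $O(V_{k-1})$ via the key estimate $n - E_{k-1} \asymp n - a_1$ (from $n - a_1 \le n - \XX^{(k-1)} \le (k-1)(n-a_1)$); hence $V_k \asymp V_{k-1} \asymp b_1 b_2 E_{k-1} \asymp b_1 b_2 E_k$. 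Adopting this split replaces your three ad hoc edge cases and closes the induction cleanly.
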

 
\begin{Remark}
The claim is not true if $m \rightarrow \infty$. 
For example, let $a_i = n - 1$ for $ i = 1, \dots, m$ and $m^2/n \rightarrow 2\lambda$, 
we have $m - n + \XX \overset{\mathcal{D}}{\rightarrow} \mathrm{Pois}  (\lambda)$ and thus $\Var(\XX) \rightarrow \lambda$. 
This is in fact the classical birthday problem~\citep{arratia1989two,diaconis2002bayesian,dasgupta2005matching}.  
\end{Remark} 
 
\begin{proof}
We prove by induction on $\XX^{(k)}$. That is, we aim to prove that 
\begin{equation}\label{eq:th1}
\Var(\XX^{(k)}) = V_k \asymp  \dfrac{  (n - a_1)   (n - a_2) }{n} \prod\limits_{i=1}^k \dfrac{a_i}{n}  =  \left( 1 - \dfrac{a_1}{n} \right)  \left( 1 - \dfrac{a_2}{n} \right) E_k, 
\end{equation}
for $k = 2, \dots, m$. 
By \eqref{eq:var.m2}, the claim holds trivially for $\XX^{(2)}$.
We now suppose the above claim holds for  $\XX^{(k-1)}$ ($k \geq 3$) and consider $\XX^{(k)}$. 
 
The first subcase to consider is $\alpha_k = \lim a_k/n =  0$. 
In this subcase, by assumption (A3), $\alpha_i = 0$ for $i \leq k$. 
Hence, 
\begin{align*}
\left( 1 -  \dfrac{a_1}{n} \right)  \left( 1 - \dfrac{a_2}{n} \right) \sim 1. 
\end{align*}
Since $E_{k-1} \leq a_1$, $E_{k-1}/n \rightarrow 0$. 
Hence, the first component of $V_k$ in~\eqref{eq:decomp2} is
\begin{equation}\label{eq:induct}
\dfrac{a_k (n - a_k)  E_{k-1}  (n - E_{k-1} )   }{n^2 (n-1)}  \sim \dfrac{   a_k  E_{k-1}  }{n} = E_k. 
\end{equation}
According to the induction assumption, $V_{k-1} \asymp E_{k-1}$ and thus \eqref{eq:induct} has the same order as $ a_k  V_{k-1}/n$. 
Since $\alpha_k = 0$, the second component of $V_k$ in~\eqref{eq:decomp2} has a strictly smaller order. 
Hence, the order of $V_k$ is determined by its first component, which is asymptotically equal to $E_k$ by~\eqref{eq:induct}, and thus \eqref{eq:th1} holds.
 
The second subcase we consider is $ \alpha_k \in (0, 1]$. 
By the induction assumption, $ V_{k-1} \asymp (n - a_1) (n - a_2) E_{k-1} / n^2$.
Hence, 
\begin{equation*}
\dfrac{a_k (n - a_k)  E_{k-1}  ( n - E_{k-1} ) }{n^2 (n-1)}   \asymp \dfrac{ ( n  - a_k )  ( n -E_{k-1} )  V_{k-1} }{(n - a_1) (n - a_2) } <  \dfrac{   (n - E_{k-1} )  V_{k-1} }{ n - a_1  }. 
\end{equation*}
However, since $n - a_1 \leq n -  \XX^{(k-1)}  \leq    (k - 1) (n - a_1)$ and $k$ is finite, we have $n - E_{k-1} \asymp  n - a_1$.
Thus the first component of $V_k$ in~\eqref{eq:decomp2} has the same or a smaller order than $V_{k-1}$.
Since $\alpha_k > 0$ implies that the second component of $V_k$ in~\eqref{eq:decomp2} has the same asymptotic order as $V_{k-1}$, 
\begin{equation*}
V_k \asymp V_{k-1}  
\asymp  \left( 1 - \dfrac{a_1}{n} \right)  \left( 1 - \dfrac{a_2}{n} \right) E_{k - 1} 
\asymp  \left( 1 - \dfrac{a_1}{n} \right)  \left( 1 - \dfrac{a_2}{n} \right) E_k . 
\end{equation*} 
This completes the proof.
\end{proof}

By Theorem~\ref{th:main}, the limiting distribution of $\XX$ is fully determined by the convergence of the sequence $n^{- (m + 1)} (n - a_1) (n - a_2) a_1 \cdots a_m$. 
If it converges to zero, $\XX$ converges in probability to some constant; if it converges to some finite nonzero constant, $\XX$ has a Poisson limit. 
The following corollary shows that $\XX$ has a Poisson limit only when $\alpha_1, \alpha_2 \in \{0, 1\}$.

\begin{Corollary}\label{lm:conv}
Under the assumptions {\normalfont (A1)--(A4)}, $\Var(\XX)$ may converge to a finite constant only if $\alpha_1, \alpha_2 \in \{0, 1\}$ where $\alpha_i = \lim a_i/n$.  
This condition is necessary but not sufficient. 
\end{Corollary}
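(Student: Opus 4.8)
The plan is to read everything off Theorem~\ref{th:order.V}, which gives $\Var(\XX) \asymp (1 - a_1/n)(1 - a_2/n)\,\mathbb{E}(\XX)$ with $\mathbb{E}(\XX) = n\prod_{i=1}^m (a_i/n)$. Hence $\Var(\XX)$ stays bounded if and only if the product of these three factors stays bounded, and for the necessity I would prove the contrapositive: if $\alpha_1 \notin \{0,1\}$ or $\alpha_2 \notin \{0,1\}$, then $\Var(\XX) \to \infty$. Because (A3) forces $\alpha_1 \le \alpha_2$, this hypothesis reduces to exactly two regimes: (i) $\alpha_2 \in (0,1)$; and (ii) $\alpha_2 = 1$ with $\alpha_1 \in (0,1)$. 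The combination $\alpha_1 \in (0,1)$, $\alpha_2 = 0$ is excluded by $\alpha_1 \le \alpha_2$, and $\alpha_1 = 1$ forces $\alpha_2 = 1$, so no other cases arise.

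In regime (i), since $a_1 \le a_2$ and $a_2/n \to \alpha_2 \in (0,1)$, both $1 - a_1/n$ and $1 - a_2/n$ are eventually bounded away from $0$ (and are trivially $\le 1$), so $(1 - a_1/n)(1 - a_2/n) \asymp 1$ and therefore $\Var(\XX) \asymp \mathbb{E}(\XX)$. Writing $\mathbb{E}(\XX) = a_1 \prod_{i=2}^m (a_i/n) \ge a_1 (a_2/n)^{m-1}$ and using $a_1 \to \infty$ from (A2), $a_2/n \to \alpha_2 > 0$, and the finiteness of $m$, I get $\mathbb{E}(\XX) \to \infty$. In regime (ii), $1 - a_1/n \to 1 - \alpha_1 \in (0,1)$ is bounded away from $0$ and from $\infty$, while $a_1/n \to \alpha_1 > 0$ makes $a_1 \asymp n$; thus $\Var(\XX) \asymp (1 - a_2/n)\,\mathbb{E}(\XX) = (1 - a_2/n)\, a_1 \prod_{i=2}^m (a_i/n)$, and since $a_i \ge a_2$ together with $\alpha_2 = 1$ give $\prod_{i=2}^m (a_i/n) \asymp 1$, this is $\asymp (1 - a_2/n)\, n = n - a_2 \to \infty$ by (A2). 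In both regimes $\Var(\XX) \to \infty$, which establishes necessity.

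For the non-sufficiency I would exhibit a single example: take $m = 2$ and $a_1 = a_2 = \lfloor n^{2/3} \rfloor$. Then (A1)--(A4) hold with $\alpha_1 = \alpha_2 = 0 \in \{0,1\}$, yet by~\eqref{eq:var.m2} one has $V_2 = a_1 a_2 (n - a_1)(n - a_2) / [n^2 (n-1)] \asymp a_1 a_2 / n \asymp n^{1/3} \to \infty$, so by Theorem~\ref{th:main} $\XX$ has a Gaussian rather than a Poisson limit. This shows that $\alpha_1, \alpha_2 \in \{0,1\}$ does not force $\Var(\XX)$ to converge.

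Since Theorem~\ref{th:order.V} already does the analytic work, no genuinely hard step is expected; the only care needed is the bookkeeping of the $\asymp$ relations in the two regimes — in particular confirming that $1 - a_1/n$ is bounded away from $0$ whenever $\alpha_2 < 1$ (which is exactly where $a_1 \le a_2$ enters) and that $\mathbb{E}(\XX) \to \infty$ reduces, via $a_i \ge a_2$ and $m < \infty$, to the single assumption $a_1 \to \infty$ in (A2).
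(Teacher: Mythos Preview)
Your argument is correct and rests on the same ingredients as the paper's proof: Theorem~\ref{th:order.V} for necessity and the $m=2$, $a_1=a_2=n^{2/3}$ counterexample for non-sufficiency. The only difference is organizational---the paper first shows $V_2\to\infty$ directly from~\eqref{eq:var.m2} and then inducts via $V_k\asymp V_{k-1}$ (since $\alpha_k>0$), whereas you apply Theorem~\ref{th:order.V} once at level $m$ and do a clean two-case analysis, which is slightly more direct.
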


\begin{proof}
By assumption (A2), $a_i (n - a_i) / n \rightarrow \infty$ for every $i$. 
Hence, according to~\eqref{eq:var.m2}, the claim holds  for $\Var( \XX^{(2)} )$.  
Now consider $\Var( \XX^{(k)} ) $ with $k \geq 3$. By assumption (A3), if $\alpha_1$ or $\alpha_2$ is in $(0, 1)$, we have $\alpha_k > 0$. 
By Theorem~\ref{th:order.V}, this implies that $\Var(\XX^{(k)})$ has the same order as $\Var(\XX^{(k - 1)})$ and thus diverges.   
To see this condition is not sufficient, consider $\Var( \XX^{(2)} )$ for $a_1 = a_2 = n^{2/3}$, which implies $\alpha_1 = \alpha_2 = 0$.
A direct calculation using~\eqref{eq:var.m2} gives $\Var( \XX^{(2)} ) \sim n^{1/3}$. 
\end{proof}

\section{Poisson convergence}\label{sec:pois} 
Consider the simplest case of two coupon collectors and the associated $2 \times 2$ contingency table.  
If $\Var(\XX) \rightarrow 0$, the variance of any other cell must also tend towards zero since there is only one degree of freedom when the marginal totals are fixed. 
It is straightforward to see that $\XX$ should have three different ``limits''. 
First, if $\alpha_1 =  \alpha_2 = 0$, we have $\XX \rightarrow 0$  . 
Second, if $\alpha_1 = 0, \alpha_2 = 1$, then $X_{(1,2)} = a_1 - \XX \rightarrow 0$ , i.e. every coupon collected by the first collector would also be collected by the second. 
Third, if $\alpha_1 = \alpha_2 = 1$, then $X_{(2,2)} = \XX + n - a_1 - a_2 \rightarrow 0$, i.e.  no coupon would be missed by both collectors.

For the Poisson convergence of $m$-way $2 \times \cdots \times 2$ contingency table, it still suffices to consider the above three scenarios.

\begin{Lemma}\label{prop:main}
Under the assumptions {\normalfont (A1)--(A4)}, for $m \geq 2$,
\begin{enumerate}[(i)]
\item $a_1 / n \rightarrow 0, \, a_2 / n \rightarrow 0$: $ \mathbb{E}( \XX  )  \sim \Var(\XX) $; 
\item $a_1 / n \rightarrow 0, \, a_2 / n \rightarrow 1$: $ \mathbb{E}( a_1 - \XX ) \sim \Var(\XX) $; 
\item $a_1/n \rightarrow 1, \, a_2 / n\rightarrow 1$:  $ \mathbb{E} ( \XX  + (m - 1)n - \sum_{i=1}^m a_i ) 
\sim \Var(\XX)$. 
\end{enumerate}

\end{Lemma}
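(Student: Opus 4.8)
The plan is to prove each of the three equivalences by induction on $k$, running through $\XX^{(2)}, \XX^{(3)}, \dots, \XX^{(m)}$ exactly as in the proof of Theorem~\ref{th:order.V}, with Lemma~\ref{lm:decomp} supplying the recursion. In each case I would introduce the auxiliary sequence $\mu_k$ whose expectation appears on the left of the claimed equivalence --- $\mu_k = E_k$ in case~(i), $\mu_k = a_1 - E_k$ in case~(ii), and $\mu_k = E_k + (k-1)n - \sum_{i=1}^{k} a_i$ in case~(iii) --- and aim to show $V_k \sim \mu_k$ for $k = 2,\dots,m$. Using $E_k = (a_k/n)E_{k-1}$, each $\mu_k$ obeys a simple one-step recursion: $\mu_k = (a_k/n)\mu_{k-1}$ in case~(i), $\mu_k = (n-a_k)a_1/n + (a_k/n)\mu_{k-1}$ in case~(ii), and $\mu_k = (n-a_k)(n-E_{k-1})/n + \mu_{k-1}$ in case~(iii). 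The base case $k=2$ is a one-line computation from~\eqref{eq:var.m2}: for instance in case~(ii), $V_2/(a_1 - E_2) = a_2(n-a_1)/\{n(n-1)\} \to 1$ since $a_2/n \to 1$ and $a_1/n \to 0$, and cases~(i) and~(iii) are analogous.

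For the inductive step I would show, using Lemma~\ref{lm:decomp} together with the growth conditions of the relevant case, that the two additive components of $V_k$ in~\eqref{eq:decomp2} match up with the terms of the $\mu_k$-recursion. In cases~(ii) and~(iii), assumptions~(A3)--(A4) force $a_k/n \to 1$ for every $k\ge 2$, so $a_k(a_k-1)/\{n(n-1)\} \to 1$ and the second component is $\sim V_{k-1} \sim \mu_{k-1}$ by the inductive hypothesis, while the first component works out to $\sim (n-a_k)a_1/n$ in case~(ii) --- here one uses $E_{k-1} \sim a_1$, valid because $E_{k-1} = a_1\prod_{i=2}^{k-1}(a_i/n)$ is $a_1$ times finitely many factors tending to $1$ --- and to $\sim (n-a_k)(n-E_{k-1})/n$ in case~(iii). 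In case~(i) one splits on $\alpha_k = \lim a_k/n$: when $\alpha_k = 0$ the second component has strictly smaller order and the first is $\sim E_k$ exactly as in~\eqref{eq:induct}; when $\alpha_k \in (0,1]$, $E_{k-1} \le a_1 = o(n)$ gives first component $\sim (1-\alpha_k)E_k$ and, since $a_k(a_k-1)/\{n(n-1)\} \to \alpha_k^2$ with $V_{k-1} \sim E_{k-1}$ and $E_k \sim \alpha_k E_{k-1}$, second component $\sim \alpha_k^2 E_{k-1} \sim \alpha_k E_k$, so the two pieces split $E_k$ into $(1-\alpha_k)E_k + \alpha_k E_k$. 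In every case $V_k$ is thus a sum of two nonnegative sequences each asymptotically equal to the corresponding term of the $\mu_k$-recursion, whence $V_k \sim \mu_k$ by the elementary fact that $a_n \sim a_n'$ and $b_n \sim b_n'$ with $a_n', b_n' \ge 0$ imply $a_n + b_n \sim a_n' + b_n'$.

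I expect the main obstacle to be purely one of bookkeeping: keeping the estimates sharp enough to conclude genuine asymptotic equivalence ($\sim$) rather than mere equality of orders ($\asymp$). This matters precisely because, in case~(i) with $\alpha_k\in(0,1)$ and in cases~(ii)--(iii) in general, the two components of~\eqref{eq:decomp2} are of the same order and both feed into the leading constant, so Theorem~\ref{th:order.V} alone does not suffice and the contributions must be tracked term by term. Two small side facts enter along the way and should be recorded: the approximation $1-\prod_i(1-\epsilon_i) \sim \sum_i \epsilon_i$ for $\epsilon_i \to 0$ with finitely many $i$ (used as $E_{k-1}\sim a_1$ in case~(ii) and $n - E_{k-1} \sim \sum_{i=1}^{k-1}(n-a_i)$ in case~(iii)), and the nonnegativity of $\mu_k$, e.g. $E_k + (k-1)n - \sum_{i\le k} a_i \ge 0$ by Bonferroni, which is what licenses the final ``sum of asymptotics'' step.
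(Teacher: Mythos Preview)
Your proposal is correct and follows essentially the same inductive strategy as the paper, using Lemma~\ref{lm:decomp} to match the two components of~\eqref{eq:decomp2} against the two terms of a recursion for the target sequence $\mu_k$. The only notable difference is in case~(i): you split on whether $\alpha_k = 0$ or $\alpha_k \in (0,1]$, whereas the paper avoids the split by observing that the two components are asymptotically $E_k(n-a_k)/(n-1)$ and $E_k(a_k-1)/(n-1)$, which sum to $E_k$ identically regardless of $\alpha_k$ --- a slightly slicker bookkeeping that you may wish to adopt.
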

\begin{Remark}
No assumption about the convergence of $\Var(\XX)$ is needed. 
\end{Remark}

\begin{proof}
Just like we did in the proof for Theorem~\ref{th:order.V}, we prove each case separately by induction on the sequence $\XX^{(2)}, \dots, \XX^{(m)}$. 

\medskip 
\textbf{Case (i):}  
We  use induction to prove that, given $a_1 / n \rightarrow 0, \, a_2 / n \rightarrow 0$, we have $E_k \sim V_k$ for $k = 2, \dots, m$.  
For $k = 2$, the statement can be verified immediately using~\eqref{eq:var.m2}. 
Next, we assume $E_{k-1} \sim V_{k-1}$ and consider $E_k$ and $V_k$. 
Note that $\alpha_1 = \lim a_1 /n = 0$ implies $E_j / n \rightarrow 0$ for every $j$ since $E_j < a_1$. 
By the induction assumption and the identity $E_k = a_k E_{k - 1}/n$,  for the two components of $V_k$ in~\eqref{eq:decomp2}  we have
\begin{align*}
\dfrac{a_k (n - a_k)  E_{k-1}  ( n - E_{k-1} )  }{n^2 (n-1)} \sim \dfrac{E_k (n - a_k)}{n-1}, \quad
   \dfrac{ a_k (a_k - 1) }{n(n-1)}   V_{k-1} \sim \dfrac{ E_k (a_k - 1)}{ n - 1}. 
\end{align*}
It thus follows that $V_k = E_k + o(E_k)$.

\medskip 
\textbf{Case (ii):}
We  use induction to prove that, given $a_1 / n \rightarrow 0, \, a_2 / n \rightarrow 1$, we have $a_1 - E_k \sim V_k$ for $k = 2, \dots, m$.  
Again, for $k = 2$, the statement is immediate by~\eqref{eq:var.m2}. 
For the induction step, observe that $ (n - a_k) E_{k-1}/n = E_{k-1} - E_k $.
Hence, assuming $a_1 - E_{k - 1} \sim V_k-1$,  which is the induction assumption, and using the fact that $\alpha_k = 1$ and $E_{k-1}/n \rightarrow 0$, we obtain
\begin{align*}
\dfrac{a_k (n - a_k)  E_{k-1}  ( n - E_{k-1} )  }{n^2 (n-1)} \sim E_{k-1} - E_k, \quad
   \dfrac{ a_k (a_k - 1) }{n(n-1)}   V_{k-1} \sim a_1 - E_{k-1}. 
\end{align*}
Since both terms are always positive, by~\eqref{eq:decomp2}, we arrive at $V_k = a_1 - E_k + o(V_k)$. 

\medskip 
\textbf{Case (iii):}
We  use induction to prove that, given $a_1 / n \rightarrow 1, \, a_2 / n \rightarrow 1$, we have $E_k + (k - 1)n - \sum_{i=1}^k a_i \sim V_k$, for $k = 2, \dots, m$.  
For $k = 2$, the statement follows from~\eqref{eq:var.m2}. 
The induction argument is very similar to that of case (ii). 
We need only observe that $E_{k-1}/n \rightarrow 1$ and 
$ (n - a_k)  (n - E_{k-1} ) / n =  E_k   - E_{k - 1}  + n - a_k$.
\end{proof}

To establish the Poisson convergence of $\XX$, we use Stein-Chen's method for ``negatively associated''
and ``negatively related'' random variables, the definitions of which are given below. 

\begin{Definition}[\citet{joag1983negative}]
Random variables $Y_1, \dots, Y_N$ are said to be negatively associated if for every pair of disjoint subsets $A_1, A_2 \subseteq \{1,2, \dots, N\}$ and any nondecreasing functions $f_1$ and $f_2$, 
we have 
\begin{equation*}
\mathrm{Cov} (  f_1(Y_i, i \in A_1),   f_2 (Y_j, j \in A_2) ) \leq 0. 
\end{equation*}
\end{Definition}

\begin{Definition}[\citet{erhardsson2005stein}]
Bernoulli random variables $Y_1, \dots, Y_N$ are said to be negatively related if for each $i \in \{1, 2, \dots, N\}$ and any nondecreasing function $f: \{0, 1\}^{N-1} \mapsto \{0, 1\}$, we have 
\begin{equation*}
\mathbb{E} [ f(Y_1, \dots, Y_{i - 1},  Y_{i+1}, \dots, Y_N) \mid Y_i = 1  ] \leq \mathbb{E}[ f(Y_1, \dots, Y_{i - 1},  Y_{i+1}, \dots, Y_N) ]. 
\end{equation*}
\end{Definition}

In particular, negatively associated Bernoulli random variables are negatively related~\citep[Theorem 2.I]{barbour1992poisson}. 
We will first show that $\XX$,  $a_1 - \XX$ and $\XX  + (m - 1)n - \sum_{i=1}^m a_i $  can be decomposed into sums of negatively associated random variables. 
For $m = 2$,  all the three random variables follow hypergeometric distribution, and the negative association property of hypergeometric random variables has been well studied~\citep{joag1983negative, daly2012stein}.
Here we prove the general case $m \geq 2$.

\begin{Lemma}\label{prop:na}
$\XX$ and $a_1 - \XX$  can be written as sums of negatively related Bernoulli random variables. 
$\XX  + (m - 1)n - \sum_{i=1}^m a_i$ can be written as a sum of non-negative integer-valued negatively associated random variables.  
\end{Lemma}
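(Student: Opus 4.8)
The plan is to exhibit explicit representations of each of the three statistics as a sum of Bernoulli (or non-negative integer) indicators and then verify negative association / negative relation at the level of the underlying allocation. The natural sample space to work on is the following: independently for each collector $i$, choose a uniformly random $a_i$-subset $S_i$ of the $n$ coupons; then $\XX = \sum_{j=1}^n \bm{1}\{j \in S_1 \cap \cdots \cap S_m\}$, i.e. $\XX = \sum_j Y_j$ with $Y_j = \prod_{i=1}^m \bm{1}\{j \in S_i\}$. First I would recall the standard fact (used already in the paper via~\citet{joag1983negative, daly2012stein}) that for a single collector the indicators $\bm{1}\{1 \in S_i\}, \dots, \bm{1}\{n \in S_i\}$ are negatively associated, being the coordinates of a uniformly random binary vector with fixed sum (a ``permutation distribution'' / sampling-without-replacement model). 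Since the $S_i$ are independent across $i$, the whole collection $\{\bm{1}\{j \in S_i\} : 1 \le j \le n,\ 1 \le i \le m\}$ is negatively associated, because a union of independent negatively associated families is negatively associated. The $Y_j = \prod_i \bm{1}\{j \in S_i\}$ are obtained by taking, for each $j$, a nondecreasing (coordinatewise) function of a block of these variables, and these blocks are disjoint across $j$; hence $Y_1, \dots, Y_n$ are negatively associated, and being Bernoulli they are negatively related by~\citep[Theorem 2.I]{barbour1992poisson}. That handles $\XX$.

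For $a_1 - \XX$, write $a_1 - \XX = \sum_{j \in S_1} \bm{1}\{j \notin S_2 \cap \cdots \cap S_m\} = \sum_j Z_j$ where $Z_j = \bm{1}\{j \in S_1\}\cdot\big(1 - \prod_{i \ge 2}\bm{1}\{j \in S_i\}\big)$. The subtlety is that $Z_j$ is \emph{not} monotone in the raw indicators ($j \in S_1$ pushes it up, $j \in S_i$ for $i \ge 2$ pushes it down), so I cannot directly apply the ``monotone function of disjoint blocks'' argument to that representation. I would fix this by flipping the sign of the offending collectors: for $i \ge 2$ set $\bar S_i = S_i^c$, the complementary $(n - a_i)$-subset, whose indicators are again negatively associated, and which are independent of $S_1$ and of each other. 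Then $Z_j = \bm{1}\{j \in S_1\} - \prod_{i=1}^m \bm{1}\{j \in S_1 \text{ if } i=1,\ j \notin S_i \text{ if } i \ge 2\}$ — hmm, that is still a difference. The cleaner route: condition on $S_1$ (equivalently, restrict attention to the coupons in $S_1$), so that $a_1 - \XX = \sum_{j \in S_1} W_j$ with $W_j = 1 - \prod_{i \ge 2}\bm{1}\{j \in S_i\} = \max_{i \ge 2}\bm{1}\{j \in \bar S_i\}$, a nondecreasing function of the block $\{\bm{1}\{j \in \bar S_i\}: i \ge 2\}$; these blocks are disjoint across $j \in S_1$ and the full family $\{\bm{1}\{j \in \bar S_i\}\}$ is negatively associated, so $(W_j)_{j \in S_1}$ is a negatively related family \emph{conditionally on $S_1$}. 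Since the index set $S_1$ is itself random I would phrase the final statement on the enlarged space, or equivalently invoke the fact that negative relation is preserved under mixing over the conditioning variable $S_1$ when the Stein--Chen bound is applied; alternatively one keeps all $n$ coupons and notes $a_1 - \XX = \sum_{j=1}^n \bm{1}\{j\in S_1\}\max_{i\ge 2}\bm{1}\{j\in \bar S_i\}$, where now each summand \emph{is} a nondecreasing function of the disjoint block $\{\bm{1}\{j\in S_1\}\}\cup\{\bm{1}\{j\in\bar S_i\}:i\ge2\}$ in the variables $(S_1,\bar S_2,\dots,\bar S_m)$ — this is the representation I would actually write down, and it is monotone because every literal appearing is a positive indicator.

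For $\XX + (m-1)n - \sum_i a_i$, the guiding identity is inclusion--exclusion: $(m-1)n - \sum_i a_i + \XX = \sum_j\big(1 - \sum_{i=1}^m \bm{1}\{j \notin S_i\} + \bm{1}\{j \in \bigcap_i S_i\}\big)$, but a slicker way is $n - |S_1^c \cup \cdots \cup S_m^c| + (\text{correction})$; more transparently, for each coupon $j$ define $U_j = \max\big(0,\ 1 - \sum_{i=1}^m \bm{1}\{j\in\bar S_i\}\big) + (\text{terms})$ — rather than chase the exact algebra here, the point I would make is that $\XX + (m-1)n - \sum_i a_i = \sum_j U_j$ where $U_j$ is a non-negative-integer-valued nondecreasing function of the disjoint block $\{\bm{1}\{j\in S_1\},\dots,\bm{1}\{j\in S_m\}\}$ (concretely, working through inclusion--exclusion one finds $U_j$ equals the number of collectors beyond the first that collect $j$, minus $(m-1)$ plus corrections, which reorganizes into a manifestly monotone non-negative count — I would verify the case $m=2$, where $U_j = \bm{1}\{j\in S_1\cap S_2\}$ gives exactly $X_{(2,2)}$'s complement, and then do the induction). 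Negative association of the vector-valued blocks $\{(\bm{1}\{j\in S_i\})_{i=1}^m\}_j$ (disjoint in $j$, each block internally NA since independent, whole family NA by independence across $j$) together with the ``disjoint monotone functions'' closure property of NA gives that $(U_j)_j$ is negatively associated.

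The main obstacle is getting the three explicit summand representations \emph{monotone in positively-signed indicators} — once that is arranged, everything reduces to two black-box facts: (a) the coordinates of a uniform fixed-sum binary vector are negatively associated, and (b) NA is closed under taking nondecreasing functions of disjoint subsets and under unions of independent families. Step (c), transferring from negative association to negative relation for the Bernoulli cases, is then immediate from~\citep[Theorem 2.I]{barbour1992poisson}. For the $m=2$ base case all three statistics are hypergeometric and the conclusion is already in the cited literature, so the real content is the bookkeeping in the induction/representation for general $m$, and in particular checking in case (iii) that the inclusion--exclusion remainder is genuinely non-negative and monotone rather than merely integer-valued.
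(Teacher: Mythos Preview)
Your treatment of $\XX$ and $a_1 - \XX$ is essentially the paper's proof: once you settle on the representation $Z_j = \bm{1}\{j\in S_1\}\max_{i\ge 2}\bm{1}\{j\in \bar S_i\}$, this is exactly the paper's $Y'_j = I_{1j}\max(J_{2j},\dots,J_{mj})$, and the closure properties of negative association you cite (independent unions, monotone functions of disjoint blocks) are the same P6/P7 of \citet{joag1983negative} that the paper invokes. The detour through conditioning on $S_1$ is unnecessary but harmless.

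The genuine gap is case~(iii). You never write down the summand $U_j$, and the properties you assert for it are not quite right. The clean formula, which is the missing idea, is
\[
Y''_j \;=\; \Bigl(\,\sum_{i=1}^m J_{ij} - 1\Bigr)\vee 0, \qquad J_{ij}=\bm{1}\{j\in \bar S_i\},
\]
i.e.\ one less than the number of collectors who miss coupon $j$, truncated at zero. A two-line check gives $\sum_j Y''_j = \XX + (m-1)n - \sum_i a_i$. Note that $Y''_j$ is nondecreasing in the \emph{complementary} indicators $J_{ij}$, hence \emph{nonincreasing} in the $\bm{1}\{j\in S_i\}$; your claim that $U_j$ is nondecreasing in the latter is false (increasing any $I_{ij}$ from $0$ to $1$ either decreases $U_j$ by one or leaves it fixed). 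This does not break the argument---nonincreasing functions of disjoint blocks of NA variables are still NA by the same symmetry---but you do need to get the direction right, and more importantly you need the explicit formula rather than gesturing at inclusion--exclusion. No induction on $m$ is required or helpful here; once the formula is on the table, non-negativity and monotonicity are immediate.
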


\begin{proof}
For $\XX$, the statement was proven in \citet{barbour1989some} via coupling methods.
Here we use another method, which works for all three random variables. 
Let $I_{ij}$ ($i=1, \dots, m$,  $j = 1,\dots, n$) be a Bernoulli random variable such that $I_{ij} = 1$ if coupon $j$ is collected by the $i$th collector.  Let $J_{ij} = 1 - I_{ij}$. 
For each $i$, $\{I_{ij}: j = 1, \dots, n\}$ and $\{J_{ij}: j = 1, \dots, n\}$ are sets of negatively related random variables~\citep[Theorem 2.11]{joag1983negative}. 
The three random variables can be decomposed as 
\begin{equation}\label{eq:na.decomp}
\begin{array}{cl}
 \XX = \sum\limits_{j=1}^n Y_j ,  & Y_j \equiv \min ( I_{1j}, I_{2j}, \dots, I_{mj} )  ,     \\
  a_1 - \XX = \sum\limits_{j=1}^n Y'_j ,  &      Y'_j  \equiv  I_{1j}   \,   \max ( J_{2j}, \dots, J_{mj} )   ,   \\
 \XX  + (m - 1)n - \sum\limits_{i=1}^m a_i  = \sum\limits_{j=1}^n Y''_j , \quad  \quad &     Y''_j  \equiv  (  -1 + \sum_{i=1}^m J_{ij} )  \vee 0 . 
\end{array}
\end{equation}
All the three functions, $Y_j, Y'_j$ and $Y''_j$ are nondecreasing. 
Applying Property P6 and Property P7 of~\citet{joag1983negative} and using the independence assumption of the collectors, we see that $\{Y_j\}, \{Y'_j\}$ and $\{Y''_j\}$ are sets of negatively associated random variables.
Furthermore,  $\{Y_j\}$ and $\{Y'_j\}$ are negatively related since they are indicator random variables. 
\end{proof}

For a sum of negatively related random variables, Stein-Chen's method allows us to establish the Poisson convergence by simply comparing the first two moments.  

\begin{Theorem}[Poisson convergence of $\XX$]\label{th:poisson}
Under the assumptions {\normalfont (A1)--(A4)}, $\XX$ has a Poisson limit if and only if $\, \Var(\XX) \rightarrow \rho \in [0, \infty)$.  
($\Pois(0)$ refers to the degenerate distribution $\delta_0$.)
Let $\mathrm{Pois}(\rho)$ denote the Poisson distribution with parameter $\rho$. There are only three possible subcases:
\begin{enumerate}[(i)]
\item $a_1 / n \rightarrow 0, \, a_2 / n \rightarrow 0$: $ \XX   \overset{\mathcal{D}}{\rightarrow } \Pois(\rho) $. 
\item $a_1 / n \rightarrow 0, \, a_2 / n \rightarrow 1$: $ a_1 - \XX   \overset{\mathcal{D}}{\rightarrow } \Pois(\rho) $; 
\item $a_1/n \rightarrow 1, \, a_2 / n\rightarrow 1$:  $\XX  + (m - 1)n - \sum_{i=1}^m a_i   \overset{\mathcal{D}}{\rightarrow } \Pois(\rho)$.
\end{enumerate}
\end{Theorem}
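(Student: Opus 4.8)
The strategy is to prove the equivalence in two halves and then dispatch the three subcases with Stein--Chen applied to the decompositions from Lemma~\ref{prop:na}. For the ``only if'' direction I would argue by subsequences: if $\Var(\XX)$ diverged along a subsequence, Theorem~\ref{th:main} (which applies along any subsequence, (A1)--(A4) being preserved) would make $\XX^*$ asymptotically normal there, so $\XX$ could not have a Poisson limit; hence $\limsup_n \Var(\XX) < \infty$, and if $\Var(\XX)$ nonetheless failed to converge, two subsequences with distinct finite limits would yield, again by Theorem~\ref{th:main}, two Poisson limits of $\pm\XX$ (up to additive constants) with different parameters, which is impossible. So $\Var(\XX)\to\rho$ for some finite $\rho$. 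For the reduction to the three subcases, Corollary~\ref{lm:conv} forces $\alpha_1,\alpha_2\in\{0,1\}$ once $\Var(\XX)\to\rho<\infty$, and since $\alpha_1\le\alpha_2$ by (A3) the only options are $(\alpha_1,\alpha_2)\in\{(0,0),(0,1),(1,1)\}$ --- exactly subcases (i)--(iii) --- with $(\alpha_1,\alpha_2)=(1,1)$ forcing $\alpha_i=1$ for all $i$ by (A3).

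For the ``if'' direction, fix a subcase and let $W$ be $\XX$, $a_1-\XX$, or $\XX+(m-1)n-\sum_{i=1}^m a_i$ respectively; in each case $W=\pm\XX+\text{const}$, so $\Var(W)=\Var(\XX)\to\rho$, and Lemma~\ref{prop:main} gives $\mathbb{E}(W)\sim\Var(\XX)$, hence $\lambda_n:=\mathbb{E}(W)\to\rho$ and $\Var(W)/\lambda_n\to 1$. By Lemma~\ref{prop:na}, $W=\sum_{j=1}^n\xi_j$ with $\xi_j$ exchangeable and negatively associated, the $\xi_j$ being $\{0,1\}$-valued in subcases (i) and (ii). The Stein--Chen bound for sums of negatively related indicators, respectively negatively associated non-negative integer-valued summands~\citep[Chap.~2]{barbour1992poisson}, has the schematic form
\begin{equation*}
d_{\mathrm{TV}}\bigl(\mathcal{L}(W),\Pois(\lambda_n)\bigr)\;\le\;\bigl(1\wedge\lambda_n^{-1}\bigr)\Bigl(\lambda_n-\Var(W)+2\sum_{j=1}^n\mathbb{E}\bigl[\xi_j(\xi_j-1)\bigr]\Bigr),
\end{equation*}
the sum being absent for indicators. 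Since $d_{\mathrm{TV}}(\Pois(\lambda_n),\Pois(\rho))\to 0$, it suffices that the right-hand side vanish, and then $W\overset{\mathcal{D}}{\to}\Pois(\rho)$. For the first term, $\bigl(1\wedge\lambda_n^{-1}\bigr)\bigl|\lambda_n-\Var(W)\bigr|\le\bigl|1-\Var(W)/\lambda_n\bigr|\to 0$; note the factor $1\wedge\lambda_n^{-1}$ equals $\lambda_n^{-1}$ and blows up exactly when $\rho=0$, which is why I use the asymptotic equivalence of Lemma~\ref{prop:main} rather than mere convergence of the first two moments. This already finishes subcases (i) and (ii).

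For subcase (iii) it remains to control the clumping term. Let $N_j=\sum_{i=1}^m J_{ij}$ be the number of collectors who miss coupon $j$, so $\xi_j=(N_j-1)\vee 0$ and $\xi_j(\xi_j-1)$ is nonzero only when $N_j\ge 3$, in which case it is at most $(m-1)(m-2)$; hence, using independence of the collectors and (A3),
\begin{equation*}
\sum_{j=1}^n\mathbb{E}\bigl[\xi_j(\xi_j-1)\bigr]\;\le\;(m-1)(m-2)\!\!\!\sum_{1\le i_1<i_2<i_3\le m}\!\!\!\frac{(n-a_{i_1})(n-a_{i_2})(n-a_{i_3})}{n^2}\;\le\;(m-1)(m-2)\binom{m}{3}\,\frac{(n-a_1)(n-a_2)}{n}\cdot\frac{n-a_3}{n}.
\end{equation*}
In subcase (iii) every $\alpha_i=1$, so Theorem~\ref{th:order.V} gives $\Var(\XX)\asymp(n-a_1)(n-a_2)/n$ and therefore $\lambda_n\asymp(n-a_1)(n-a_2)/n$; consequently $\bigl(1\wedge\lambda_n^{-1}\bigr)\sum_j\mathbb{E}[\xi_j(\xi_j-1)]$ is of order $(n-a_3)/n\to 1-\alpha_3=0$, completing the argument. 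I expect this last step to be the main obstacle: subcase (iii) is the only one where the summands are genuinely integer-valued --- coupons can be missed by two or more collectors with non-negligible total probability --- so one must invoke the integer-valued form of the Stein--Chen estimate and separately show its extra clumping term is negligible, while simultaneously tracking the degenerate case $\rho=0$ through the factor $1\wedge\lambda_n^{-1}$, which forces one to work with exact asymptotic orders rather than limits. Subcases (i) and (ii), by contrast, reduce immediately to $d_{\mathrm{TV}}(\mathcal{L}(W),\Pois(\lambda_n))\le(1\wedge\lambda_n^{-1})\bigl|\lambda_n-\Var(W)\bigr|$, handled entirely by Lemma~\ref{prop:main}.
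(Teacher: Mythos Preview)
Your strategy coincides with the paper's: reduce to the three subcases via Corollary~\ref{lm:conv} and (A3), then apply Stein--Chen to the decompositions of Lemma~\ref{prop:na}, using Lemma~\ref{prop:main} to control $\Var(W)/\mathbb{E}(W)$. Your treatment of necessity by subsequences is more explicit than the paper's (which simply asserts ``we need only prove sufficiency''), and subcases~(i) and~(ii) are dispatched identically via the Bernoulli bound $d_{\mathrm{TV}}\le 1-\Var(W)/\mathbb{E}(W)$ of \citet[Corollary~2.C.2]{barbour1992poisson}.

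The one genuine difference is subcase~(iii). You invoke a Stein--Chen bound for negatively associated non-negative integer-valued summands carrying a clumping correction $2\sum_j\mathbb{E}[\xi_j(\xi_j-1)]$, citing Chapter~2 of \citet{barbour1992poisson}. The paper instead appeals to Corollary~4.2 of \citet{daly2017relaxation}, whose bound is phrased in terms of $p=\theta^{-1}\sum_j\mathbb{P}(Y''_j=1)$ and reduces to showing $p\to 1$. Both routes encode the same fact --- that $\mathbb{P}(N_j\ge 3)$ is negligible relative to $\mathbb{P}(N_j=2)$ once every $\alpha_i=1$ --- and your estimate $\sum_j\mathbb{E}[\xi_j(\xi_j-1)]=O\bigl(\lambda_n\,(n-a_3)/n\bigr)=o(\lambda_n)$ is correct and arguably cleaner than the paper's ratio argument. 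Be aware, though, that the bound in the exact form you display is not stated in \citet[Chap.~2]{barbour1992poisson}, which treats Bernoulli summands; the paper's recourse to a 2017 reference is not incidental. You can patch this either by citing Daly as the paper does, or by observing that the truncations $Z_j=\mathbf{1}(Y''_j\ge 1)$ inherit negative association (hence negative relation), applying the Bernoulli bound to $W'=\sum_j Z_j$, and controlling $d_{\mathrm{TV}}(\mathcal{L}(W),\mathcal{L}(W'))\le\mathbb{P}(W\neq W')\le\sum_j\mathbb{P}(Y''_j\ge 2)$ together with $|\mathbb{E} W-\mathbb{E} W'|$ and $|\Var W-\Var W'|$ by the same clumping estimate.
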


\begin{proof}
We need only prove sufficiency. 
By Corollary~\ref{lm:conv}, the convergence of $\Var(\XX)$ requires $\alpha_1, \alpha_2 \in \{0, 1\}$. 
Since, by assumption (A3), $a_1 \leq a_2$,  Theorem~\ref{th:poisson} includes all the possible subcases where $\Var(\XX)$ converges. 
 By~\citet[Corollary 2.C.2]{barbour1992poisson}, if a random variable $Z$ is a sum of negatively related Bernoulli random variables, 
\begin{equation*}\label{eq:tv}
|| \mathcal{L}(Z) - \mathrm{Pois}(  \mathbb{E}(Z  ) ) ||_{\mathrm{TV}}  
< 1 -   \Var(Z) / \mathbb{E}(Z) , 
\end{equation*}
where $|| \cdot ||_{\mathrm{TV}}$ denotes the total variation distance.
Thus the Poisson convergence for case (i) and (ii) immediately follows from Lemma~\ref{prop:main} and Lemma~\ref{prop:na}. 

We now turn to case (iii).
To simplify notation, let $W \equiv   \XX  + (m - 1)n - \sum_{i=1}^m a_i$. 
Recall the decomposition $  W = \sum_{j=1}^n  Y''_j  $ given in~\eqref{eq:na.decomp}. 
Let $\theta \equiv \mathbb{E}( W   ) $ and $p \equiv  \theta^{-1} \sum_{j=1}^n \mathbb{P} (Y''_j = 1) $. 
By~\citet[Corollary 4.2]{daly2017relaxation}, 
\begin{equation}\label{eq:tv2}
|| \mathcal{L}( W ) - \mathrm{Pois}(\theta) ||_{\mathrm{TV}} 
\leq    1  + \theta  +  (1 - 2p) \left( \dfrac{ \Var(\XX) }{\theta}  + \theta \right).  
\end{equation}
By construction, for $k\geq 1$, $\mathbb{P} (Y''_j = k)$ is the probability that coupon $j$ is not collected by exactly $k+1$ collectors. 
Using the fact that $a_i/n \rightarrow 1$ for each $i$, we can show that for each $k' \geq 2$,   $ \mathbb{P} (Y''_j =  k') / \mathbb{P} (Y''_j = 1)  \rightarrow 0. $
This further implies that $\mathbb{E}( Y''_j )     \sim \mathbb{P} (Y''_j =  1) $ and thus $p \rightarrow 1$.  
Plugging this into~\eqref{eq:tv2} and using Lemma~\ref{prop:main}, we obtain  $|| \mathcal{L}( W ) - \mathrm{Pois}(\theta) ||_{\mathrm{TV}}  \leq o (\theta)  = o(1)$, which concludes the proof. 
\end{proof}

\section{Contingency tables with arbitrary sizes}\label{sec:general}
We now extend our results to a general $m$-way contingency table with size $r_1 \times r_2 \times \cdots \times r_m$. 
We use $\tilde{X}_{\bm{v}}$  to denote a cell in the general contingency table with position $\bm{v}= (v_1, v_2, \dots, v_m)$. 
The grand total of all the cells is  $n$. 
The marginal totals are fixed and are denoted by $b_i(j)$ ($i = 1,\dots, m$ and $j = 1, \dots, r_i$) which satisfy
\begin{equation}\label{eq:def.b}
b_i(j)  =  \sum\limits_{v_i = j}  \tilde{X}_{\bm{v}}  , \quad \quad  \sum\limits_{j=1}^{r_i} b_i(j) = n. 
\end{equation}
Note that the coupon collector's problem is a special case of the above with $r_i = 2$, $b_i(1) = a_i$ and $b_i(2) = n - a_i$ for each $i$. 
To study the asymptotic distribution of $\tilde{X}_{\bm{v}}$, we return to the coupon collector's model specified in Section \ref{sec1} and set
$a_i = b_i(v_i)$. 
Then  $\tilde{X}_{\bm{v}}$  has the same distribution as $\XX$ in the coupon collector model and its asymptotic distribution can be determined by Theorem~\ref{th:order.V} (after reordering $a_1, \dots, a_m$). 

We conclude the present work with two examples.  
First, consider a three-way contingency table with $r_1 = 3, r_2 = r_3 = 2$.  
The marginals are given by $\bm{b}_1 =  ( n^{1/4}, n^{1/2}, n - n^{1/4} - n^{1/2} )$, 
$\bm{b}_2 =  (n^{1/2}, n - n^{1/2})$ and $\bm{b}_3 = (n^{1/2}, n - n^{1/2})$ where $\bm{b}_i = (b_i(1), \dots, b_i(r_i))$. 
The limiting distributions of all the cells are given in Table~\ref{table:example}.
Using Theorem~\ref{th:main} and Lemma~\ref{prop:main}, each cell can be verified easily. 
It is also straightforward to check that all the marginal constraints are satisfied. 
Second, consider a three-way contingency table with the same size, same marginals $\bm{b}_1$ and $\bm{b}_2$, but $\bm{b}_3 = (n/2, n/2)$. 
The limiting distributions of all the cells are given in Table~\ref{table:example2}.
Now two thirds of the cells have normal limits and the variances of these cells are calculated manually.

\begin{table}[h!]
\begin{center}
\begin{tabular}{ |  c |  c    c | }
\hline 
$\tilde{X}_{ij1}$  & \hspace{2.2cm} $j=1$   \hspace{2.2cm} &  \hspace{2.2cm} $j=2$ \hspace{2.2cm} \\ 
\hline 
$i=1$  &   $ \tilde{X}_{111} \overset{\mathcal{P}}{\rightarrow} 0 $    &  $\tilde{X}_{121} \overset{\mathcal{P}}{\rightarrow} 0  $ \\ 
$i=2$  &   $\tilde{X}_{211} \overset{\mathcal{P}}{\rightarrow} 0  $  &   $ \tilde{X}_{221} \overset{\mathcal{D}}{\rightarrow} \Pois(1) $   \\
$i=3$  &  $ \tilde{X}_{311} \overset{\mathcal{D}}{\rightarrow} \Pois(1) $ & $ n^{1/2} - \tilde{X}_{321} \overset{\mathcal{D}}{\rightarrow} \Pois(2) $  \\
\hline 
\end{tabular}

\bigskip 

\begin{tabular}{ |  c |  c    c | }
\hline 
$\tilde{X}_{ij2}$  & \hspace{2.2cm} $j=1$   \hspace{2.2cm} &  \hspace{2.2cm} $j=2$ \hspace{2.2cm} \\ 
\hline 
$i=1$  &   $ \tilde{X}_{112} \overset{\mathcal{P}}{\rightarrow} 0 $    &  $n^{1/4} - \tilde{X}_{122}  \overset{\mathcal{P}}{\rightarrow} 0 $ \\ 
$i=2$  &   $\tilde{X}_{212} \overset{\mathcal{D}}{\rightarrow} \Pois(1)  $  &   $ n^{1/2} - \tilde{X}_{222} \overset{\mathcal{D}}{\rightarrow} \Pois(2) $   \\
$i=3$  &  $ n^{1/2} - \tilde{X}_{312} \overset{\mathcal{D}}{\rightarrow} \Pois(2) $  & $     \tilde{X}_{322} - n +n^{1/4} + 3n^{1/2}\overset{\mathcal{D}}{\rightarrow} \Pois(3) $  \\
\hline 
\end{tabular}
\caption{Example 1. The asymptotic distribution of a  $3 \times 2 \times 2$ contingency table with fixed marginals: $\bm{b}_1 =  ( n^{1/4}, n^{1/2}, n - n^{1/4} - n^{1/2} )$, 
$\bm{b}_2 =  (n^{1/2}, n - n^{1/2})$ and $\bm{b}_3 = (n^{1/2}, n - n^{1/2})$ where $b_i(j)$ is defined in~\eqref{eq:def.b}. 
}\label{table:example}
\end{center}
\end{table}

\begin{table}[h!]
\begin{center}
\begin{tabular}{ |  c |  c    c | }
\hline 
$\tilde{X}_{ij2}$  & \hspace{2.2cm} $j=1$   \hspace{2.2cm} &  \hspace{2.2cm} $j=2$ \hspace{2.2cm} \\ 
\hline 
$i=1$  &   $ \tilde{X}_{112} \overset{\mathcal{P}}{\rightarrow} 0 $    &  $2n^{-1/8} (\tilde{X}_{122}  - n^{1/4}/2 )  \overset{\mathcal{D}}{\rightarrow} \mathcal{N}^* $ \\ 
$i=2$  &   $\tilde{X}_{212} \overset{\mathcal{D}}{\rightarrow} \Pois(1/2)  $   &  $2n^{-1/4} (\tilde{X}_{222}  - n^{1/2}/2 )  \overset{\mathcal{D}}{\rightarrow} \mathcal{N}^* $  \\
$i=3$  &  $2n^{-1/4} (\tilde{X}_{312}  - n^{1/2}/2 )  \overset{\mathcal{D}}{\rightarrow} \mathcal{N}^* $  & 
$\dfrac{\tilde{X}_{322}  - n/2 + n^{1/2} + n^{1/4}/2 }{ n^{1/4}/\sqrt{2} }  \overset{\mathcal{D}}{\rightarrow} \mathcal{N}^* $ \\
\hline 
\end{tabular}
\caption{Example 2. The asymptotic distribution of a  $3 \times 2 \times 2$ contingency table with fixed marginals: $\bm{b}_1 =  ( n^{1/4}, n^{1/2}, n - n^{1/4} - n^{1/2} )$, 
$\bm{b}_2 =  (n^{1/2}, n - n^{1/2})$ and $\bm{b}_3 = (n/2, n/2)$ where $b_i(j)$ is defined in~\eqref{eq:def.b}. 
$\mathcal{N}^*$ denotes the standard normal distribution. 
Note that for any $i, j$, $\tilde{X}_{ij2}$ has the same distribution as $\tilde{X}_{ij1}$. 
}\label{table:example2}
\end{center}
\end{table}

\section*{Acknowledgments} 
The author is grateful to Dr. Philip Ernst and Dr. Larry Brown for their helpful discussions. 


\bibliographystyle{plainnat}
\bibliography{ref}

\end{document}